\definecolor{verydarkblue}{rgb}{0,0,0.5}
\theoremstyle{plain}
\crefname{introtheorem}{Theorem}{Theorems}
\newtheorem{theorem}{Theorem}[section]
\newtheorem{proposition}[theorem]{Proposition}
\newtheorem{lemma}[theorem]{Lemma}
\newtheorem{corollary}[theorem]{Corollary}
\theoremstyle{definition}
\newtheorem{example}[theorem]{Example}
\numberwithin{figure}{section}
\numberwithin{equation}{section}
\renewcommand*{\backrefalt}[4]{%
    \tiny%
    (%
    \ifcase #1 not cited%
          \or cit.~on~p.~#2%
          \else cit.~on~pp.~#2%
    \fi%
    )%
}
\def\print@backrefs#1{%
    \space\SentenceSpace%
    \begingroup%
        \expandafter\providecommand\csname brc@#1\endcsname{0}%
        \expandafter\providecommand\csname brcd@#1\endcsname{0}%
        \expandafter\backrefalt%
            \csname brc@#1\expandafter\endcsname%
            \csname brl@#1\expandafter\endcsname%
            \csname brcd@#1\expandafter\endcsname%
            \csname brld@#1\endcsname%
    \endgroup%
}
\def\maketitle{\par
  \@topnum\z@ 
  \@setcopyright
  \thispagestyle{empty}
  \ifx\@empty\shortauthors \let\shortauthors\shorttitle
  \else \andify\shortauthors
  \fi
  \@maketitle@hook
  \begingroup
  \@maketitle
  \toks@\@xp{\shortauthors}\@temptokena\@xp{\shorttitle}%
  \toks4{\def\\{ \ignorespaces}}
  \edef\@tempa{%
    \@nx\markboth{\the\toks4
      \@nx\MakeUppercase{\the\toks@}}{\the\@temptokena}}%
  \@tempa
  \endgroup
  \c@footnote\z@
    \renewcommand{\footnoterule}{%
      \kern -3pt
      \hrule width \textwidth height .5pt
      \kern 2pt
    }
  {
    \renewcommand\thefootnote{}
    \vspace{-2em}
    \footnote{
      \par\vspace{-1.2em}\noindent%
      \setlength{\parindent}{0pt}%
      \def\@footnotetext##1{\noindent{\footnotesize##1}\par}%
      \let\@makefnmark\relax  \let\@thefnmark\relax
      \ifx\@empty\@date\else \@footnotetext{\@setdate}\fi
      \ifx\@empty\@subjclass\else \@footnotetext{\@setsubjclass}\fi
      \ifx\@empty\@keywords\else \@footnotetext{\@setkeywords}\fi
      \ifx\@empty\thankses\else \@footnotetext{%
        \@setthanks}%
      \fi
    }
    \addtocounter{footnote}{-1}
  }
  \@cleartopmattertags
}
\def\@adminfootnotes{\@empty}
\def\@settitle{\begin{center}%
  \baselineskip14\p@\relax
    \bfseries
\Large
  \@title
  \end{center}%
}
\def\@setauthors{%
  \begingroup
  \def\thanks{\protect\thanks@warning}%
  \trivlist
  \centering\footnotesize \@topsep30\p@\relax
  \advance\@topsep by -\baselineskip
  \item\relax
  \author@andify\authors
  \def\\{\protect\linebreak}%
  \large{\authors}%
  \ifx\@empty\contribs
  \else
    ,\penalty-3 \space \@setcontribs
    \@closetoccontribs
  \fi
  \endtrivlist
  \endgroup
}
\def\@setaddresses{\par
  \nobreak \begingroup
\footnotesize
  \def\author##1{\end{minipage}\hskip 1sp \begin{minipage}{.5\textwidth}\raggedright%
    ~\\[2em]{\bf##1}\\[.5em]%
  }%
  \interlinepenalty\@M
  \def\address##1##2{\begingroup
    {\ignorespaces##2}\endgroup\\[.5em]}%
  \def\curraddr##1##2{\begingroup
    \@ifnotempty{##2}{\nobreak\indent\curraddrname
      \@ifnotempty{##1}{, \ignorespaces##1\unskip}\/:\space
      ##2\par}\endgroup}%
  \def\email##1##2{\begingroup
    \@ifnotempty{##2}{\nobreak\indent
      \@ifnotempty{##1}{, \ignorespaces##1\unskip}
      \ttfamily##2\par}\endgroup}%
  \def\urladdr##1##2{\begingroup
    \def~{\char`\~}%
    \@ifnotempty{##2}{\nobreak\indent\urladdrname
      \@ifnotempty{##1}{, \ignorespaces##1\unskip}\/:\space
      \ttfamily##2\par}\endgroup}%
  \setlength{\parindent}{0pt}%
  \vfill%
  {
  \begin{minipage}{0mm}
  \addresses
  \end{minipage}
  }
  \endgroup
}
\renewcommand{\author}[2][]{%
  \ifx\@empty\authors
    \gdef\authors{#2}%
    \g@addto@macro\addresses{\author{#2}}%
  \else
    \g@addto@macro\authors{\and#2}%
    \g@addto@macro\addresses{\author{#2}}%
  \fi
  \@ifnotempty{#1}{%
    \ifx\@empty\shortauthors
      \gdef\shortauthors{#1}%
    \else
      \g@addto@macro\shortauthors{\and#1}%
    \fi
  }%
}
\edef\author{\@nx\@dblarg
  \@xp\@nx\csname\string\author\endcsname}
\def\@secnumfont{\@empty}
\def\section{\@startsection{section}{1}%
  \z@{.7\linespacing\@plus\linespacing}{.5\linespacing}%
  {\large\bfseries\centering}}
\def\cF{\mathcal{F}}
\def\cK{\mathcal{K}}
\def\cQ{\mathcal{Q}}
\def\cR{\mathcal{R}}
\def\I{\mathcal{I}}
\def\O{\mathcal{O}}
\def\fa{\mathfrak{a}}
\def\fm{\mathfrak{m}}
\def\a{\alpha}
\def\b{\beta}
\def\g{\gamma}
\def\n{\nu}
\def\m{\mu}
\def\p{\pi}
\def\r{\rho}
\def\t{\tau}
\def\D{\Delta}
\def\Om{\Omega}
\def\.{\cdot}
\def\^{\widehat}
\def\~{\widetilde}
\def\o{\circ}
\def\surj{\twoheadrightarrow}
\def\({\left(}
\def\){\right)}
\def\*{{}^*}
\renewcommand{\and}{ \ \ \text{ and } \ \ }
\DeclareMathOperator{\im} {Im}
\DeclareMathOperator{\Gr} {Gr}
\DeclareMathOperator{\Spec} {Spec}
\DeclareMathOperator{\id} {id}
\begin{document}

\title{Nash blow-ups of jet schemes}

\author{Tommaso de Fernex}

\address[T.\ de Fernex]{%
    Department of Mathematics\\
    University of Utah\\
    155 South 1400 East\\
    Salt Lake City, UT 48112\\
    USA%
}

\email{defernex@math.utah.edu}

\author{Roi Docampo}

\address[Roi Docampo]{%
    Department of Mathematics\\
    University of Oklahoma\\
    601 Elm Avenue, Room 423\\
    Norman, OK 73019\\
    USA%
}

\email{roi@ou.edu}

\subjclass[2010]{Primary 14E18; Secondary 14E04, 14B05.}
\keywords{Jet scheme, Nash blow-up, singularities, Grassmannian, functor of points}

\thanks{The research of the first author was partially supported by NSF Grant DMS-1700769.}

\begin{abstract}
Given an arbitrary projective birational morphism of varieties, we provide a
natural and explicit way of constructing relative compactifications of the maps
induced on the main components of the jet schemes. In the case the morphism is
the Nash blow-up of a variety, such relative compactifications are shown to be
given by the Nash blow-ups of the main components of the jet schemes.
\end{abstract}

\maketitle

\section{Introduction}

The \emph{Nash blow-up} of a variety is defined as the universal
projective birational morphism for which the pull-back of the sheaf of
differentials admits a locally free quotient of the same rank.
The name comes from John Nash, who is generally credited for having promoted
the question of whether singularities of algebraic varieties can always be
resolved by finitely many iterations of such blow-ups; before him, the question
had already been considered by Semple \cite{Sem54}. The property is known to
hold for curves of characteristic zero, and to fail in positive characteristics
\cite{Nob75}. A variant of this question, where Nash blow-ups are alternated
with normalizations, has been settled affirmatively for surfaces of
characteristic zero by Spivakovsky \cite{Spi90}, building on \cite{Hir83}.
Higher order Nash blow-ups have been defined and studied by Yasuda
\cite{Yas07}.

The Nash blow-up can be thought as the universal operation separating multiple
limits of tangent spaces, and hence its construction relates to the geometry of
the main component of the first jet scheme of the variety. It is however
unclear a priori how the Nash blow-up of a variety should relate to the Nash
blow-up of such component. Even less obvious is whether there should be a
relationship with the Nash blow-ups of the main components of the higher jet
schemes of the variety. 

The following result shows that these Nash blow-ups are not just related, but
in fact they essentially determine each other. 

\begin{theorem}
\label{th:jet-Nash-blowup}
Let $X$ be a variety. For every $n$, the main component of the $n$-th jet
scheme of the Nash blow-up of $X$ has an open immersion into the Nash blow-up
of the main component of the $n$-th jet scheme of $X$, and such immersion is
compatible with the respective natural map to the $n$-th jet scheme of $X$.
\end{theorem}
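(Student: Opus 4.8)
The plan is to produce $\widehat{J_n^{\main}X}$ as a natural relative compactification of the morphism $J_n^{\main}\widehat{X}\to J_n^{\main}X$, where $\widehat{X}$ denotes the Nash blow-up of $X$. The point to keep in mind is that this morphism is typically \emph{not} proper, so one cannot simply invoke the universal property of the Nash blow-up for it. Write $d=\dim X$ and $W=J_n^{\main}X$, so $\dim W=(n+1)d$. Recall that $\widehat{X}\subseteq\Gr_d(\Omega_X)$ is the closure of the canonical section over $X_{\sm}$, carrying the universal locally free rank-$d$ quotient $\nu^*\Omega_X\twoheadrightarrow\cQ$, and that it is terminal among projective birational $X$-schemes $Z$ whose pullback of $\Omega_X$ admits a locally free rank-$d$ quotient; likewise $\mu\colon\widehat{W}\to W$ lies in $\Gr_{(n+1)d}(\Omega_W)$ with universal quotient $\mu^*\Omega_W\twoheadrightarrow\cP$. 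The essential external input is the authors' earlier description of differentials on jet schemes: if $\gamma\colon\Spec(\cO_{J_nX}[t]/(t^{n+1}))\to X$ is the universal jet, then $\Omega_{J_nX}$ is canonically the restriction of scalars of $\gamma^*\Omega_X$ equipped with the nilpotent operator ``$\cdot t$''; more generally $\cF\rightsquigarrow J_n\cF$ (restriction of scalars of $\gamma^*\cF$, with ``$\cdot t$'') is right exact, is compatible with jets of $X$-morphisms, and sends locally free sheaves of rank $r$ to locally free sheaves of rank $(n+1)r$.

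The first step is to build the compactification. Applying $J_n$ to the closed immersion $\widehat{X}\hookrightarrow\Gr_d(\Omega_X)$ and base-changing to $W$, the operation ``forget the $\cO[t]/(t^{n+1})$-module structure'' yields a morphism $J_n\Gr_d(\Omega_X)\times_{J_nX}W\to\Gr_{(n+1)d}(\Omega_{J_nX}|_W)$ over $W$: an $\cO[t]/(t^{n+1})$-linear rank-$d$ quotient of $\gamma^*\Omega_X$ restricts to a locally free rank-$(n+1)d$ quotient of $J_n\Omega_X|_W=\Omega_{J_nX}|_W$. One checks that this morphism is a locally closed immersion, since being ``$\cdot t$''-stable is a closed condition on submodules and the $[t]/(t^{n+1})$-quotient structure it induces is then unique. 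After verifying that $J_n^{\main}\widehat{X}$ maps into $W$ (this uses that $\nu$ is an isomorphism over $X_{\sm}$), let $Y$ be the closure of its image in the scheme $\Gr_{(n+1)d}(\Omega_{J_nX}|_W)$, which is projective over $W$. Then $Y\to W$ is projective and birational, and $J_n^{\main}\widehat{X}\hookrightarrow Y$ is an open immersion, because the image of a locally closed immersion is open in its closure.

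It remains to identify $Y$ with $\widehat{W}$ over $W$. On $Y$ the tautological quotient $\cE:=(\text{pullback of }\Omega_{J_nX}|_W)\twoheadrightarrow\cT$ has $\cT$ locally free of rank $(n+1)d$. The natural surjection $\Omega_{J_nX}|_W\twoheadrightarrow\Omega_W$ is an isomorphism where $J_nX$ is smooth, in particular at the generic point of $W$; hence dividing $\cE$ by $\ker(\cE\to\cT)$ exhibits $\cT$ as a quotient of the pullback of $\Omega_W$ as well (the discrepancy between the two quotients of $\cE$ is generically zero, hence torsion, hence dies in the locally free sheaf $\cT$). By the universal property of the Nash blow-up this produces a $W$-morphism $Y\to\widehat{W}$. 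Conversely, composing $\mu^*(\Omega_{J_nX}|_W)\twoheadrightarrow\mu^*\Omega_W\twoheadrightarrow\cP$ realizes $\cP$ as a locally free rank-$(n+1)d$ quotient of $\mu^*(\Omega_{J_nX}|_W)$, hence classifies a $W$-morphism $\widehat{W}\to\Gr_{(n+1)d}(\Omega_{J_nX}|_W)$; over the dense locus of jets through $X_{\sm}$ this morphism recovers the tautological jets of $\widehat{X}$, so by closure it factors through $Y$, giving a $W$-morphism $\widehat{W}\to Y$. The two $W$-morphisms $Y\to\widehat{W}$ and $\widehat{W}\to Y$ agree with the respective identities over the dense locus of jets through $X_{\sm}$, and $Y$ and $\widehat{W}$ are reduced and separated, so they are mutually inverse. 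Composing the isomorphism $Y\cong\widehat{W}$ with the open immersion $J_n^{\main}\widehat{X}\hookrightarrow Y$ gives the asserted open immersion $J_n^{\main}\widehat{X}\hookrightarrow\widehat{W}=\widehat{J_n^{\main}X}$, and since every morphism above is over $W\subseteq J_nX$, the compatibility with the natural maps to $J_nX$ is automatic.

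The main obstacle is conceptual rather than computational: recognizing that one must first compactify the non-proper morphism $J_n^{\main}\widehat{X}\to W$, and that the right ambient space for this is the Grassmannian of quotients of the jet-twisted cotangent sheaf $\Omega_{J_nX}|_W$ (not $\Omega_W$ itself), so that the compactification literally acquires the defining universal property of the Nash blow-up of $W$ via the surjection $\Omega_{J_nX}|_W\twoheadrightarrow\Omega_W$ and the torsion-freeness argument above. The two remaining ingredients are the cotangent-sheaf computation for jet schemes quoted at the outset and the routine verification that the comparison morphism in the second paragraph is a locally closed immersion; the rest is bookkeeping with restriction of scalars and with dense open loci.
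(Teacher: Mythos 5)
Your proof follows a genuinely different route from the paper's at the crucial technical step. The paper proves the analogue of your comparison map $J_n\Gr_d(\Omega_X)\to\Gr_{(n+1)d}(\cF_n)$ is merely \emph{universally injective}, and then obtains the open-immersion statement on main components via a lifting lemma (their Lemma 2.2): for a Noetherian local \emph{domain} $A$ whose $\Spec$ maps with dense image into the Nash transform downstairs, any lift over the closed point extends uniquely. The key mechanism there is a torsion argument showing the kernel of the quotient is automatically $t$-stable because it equals the torsion submodule; the domain hypothesis is essential. You instead argue directly that the comparison map is a \emph{locally closed immersion} of the whole jet scheme of the Grassmann bundle (not just of the main component), by describing its image functor as a closed condition followed by an open one. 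If correct, this is cleaner, gives a stronger conclusion, and circumvents the local-ring bookkeeping in the paper's proof entirely. Your second step --- identifying the closure $Y$ with $N(J_n'(X))$ via the surjection $\Omega_{J_nX}|_W\twoheadrightarrow\Omega_W$ and the fact that $N(\cF)=N(\cG)$ for a generically-isomorphic surjection $\cF\twoheadrightarrow\cG$ of sheaves of equal rank --- is essentially what the paper does when it writes $N(J_n'(X)) = N\bigl(\Omega_{J_nX}\otimes\cO_{J_n'(X)}\bigr)$, so that part matches.

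There is, however, a gap in the justification of the locally closed immersion claim. You write that it holds ``since being `$\cdot t$'-stable is a closed condition on submodules and the $[t]/(t^{n+1})$-quotient structure it induces is then unique.'' The closed condition and the uniqueness are both correct, but they only carve out a \emph{closed} subscheme $C\subset\Gr_{(n+1)d}(\Omega_{J_nX}|_W)$ on which the universal rank-$(n+1)d$ quotient $\cQ$ acquires an $\cO_C[t]/(t^{n+1})$-module structure. This does not yet say that $\cQ$ is locally free of rank $d$ \emph{over} $\cO_C[t]/(t^{n+1})$ --- a locally free $\cO$-module of rank $(n+1)d$ with a nilpotent $t$-action is free over $\cO[t]/(t^{n+1})$ only when the $t$-action is ``maximally non-degenerate'' on each fiber. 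That is precisely the extra \emph{open} condition you need to cut out the image: the fiber dimension of $\cQ/t\cQ$ is upper semicontinuous, always $\ge d$, and equals $d$ exactly where $\cQ$ is fiberwise free of rank $d$ over $k[t]/(t^{n+1})$; by Nakayama this open locus is where $\cQ$ is locally free of rank $d$ over $\cO[t]/(t^{n+1})$, and one checks this functor is represented by the open subscheme. Without this step your argument produces a closed, not locally closed, condition and the ``image is open in its closure'' step is unsupported. The gap is fillable, but as written the single sentence you offer does not address the point the paper's lifting lemma is carefully handling (there, via torsion-freeness over a domain; here, via semicontinuity plus Nakayama). Once that is supplied, the rest of the argument is sound.
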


Denoting by $N(X) \to X$ the Nash blow-up of a variety and by $J_n'(X)$ the
main component of the $n$-th jet scheme of $X$, \cref{th:jet-Nash-blowup} can
be rephrased by saying that the Nash blow-up $N(J_n'(X)) \to J_n'(X)$ gives a
relative compactification of the map $J_n'(N(X)) \to J_n'(X)$ induced on
$n$-jets by the Nash blow-up of $X$.
This implies that the Nash blow-up of a variety $X$ can equivalently be
characterized as the universal projective birational morphism $Y \to X$ such
that, for every $n$, the pull-back of $\Om_{J_n'(X)}$ via $J_n'(Y) \to J_n'(X)$
has a locally free quotient of the same rank.
The theorem also implies that the Nash blow-up of $J_n'(X)$ induces the Nash
blow-up of $X$ under the natural section (the `zero section') of the projection
$J_n'(X) \to X$. It was shown by Ishii \cite{Ish09} that if a variety $X$ is
singular then all of its jet schemes are singular, and
\cref{th:jet-Nash-blowup} implies that, if the ground field is algebraically
closed of characteristic zero, then in fact the main components of the jet
schemes are already singular.

The proof of \cref{th:jet-Nash-blowup} uses the description of the sheaves of
differentials on jet schemes given in \cite{dFD} in combination with
\cref{th:jet-Nash-transform} (stated below), which addresses a related question
in a more general context.

Suppose that $\m \colon Y \to X$ is an arbitrary projective birational morphism
of varieties. By functoriality, $\m$ induces for every $n$ a morphism on jet
schemes $\m_n \colon J_n(Y) \to J_n(X)$, and hence, by restriction, a
birational morphism $\m_n' \colon J_n'(Y) \to J_n'(X)$ between the main
components of the jet schemes. In general, $\m_n'$ is not a projective
morphism, and one can ask whether there are natural ways of constructing
relative compactifications of $\m_n'$. The next theorem provides an answer to
this question. 

The morphism $\m$ can be described as the blow-up of an ideal sheaf $\I \subset
\O_X$, and a way to approach the question is to look for natural ways of
constructing an ideal sheaf $\fa_n \subset \O_{J_n'(X)}$ whose blow-up gives a
relative compactification of $\m_n'$. Doing this directly seems hard: while a
posteriori we will provide an explicit formula for computing the local
generators of such an ideal $\fa_n$ in terms of the generators of $\I$, the
formula will show that the complexity of $\fa_n$ grows fast even in simple
examples, an indication that looking at ideals might not be the best approach.

Instead, we view $\m$ as the \emph{Nash transformation} $N(\cF) \to X$ of a
coherent sheaf $\cF$, as defined for instance in \cite{OZ91}. In this language,
the blow-up of an ideal $\I \subset \O_X$ is the same as the Nash
transformation $N(\I) \to X$ of the ideal, and the Nash blow-up of a variety
$X$ is defined to be the Nash transformation $N(\Om_X) \to X$ of the sheaf of
differentials of $X$. In general, the Nash transformation of a coherent sheaf
$\cF$ of rank $r$ is defined using the Grassmann bundle of locally free
quotients of rank $r$ of $\cF$, and is a projective birational morphism.
Conversely, every projective birational morphism $\m \colon Y \to X$ can be
realized as a Nash transformation of some coherent sheaf $\cF$ on $X$. 
\begin{theorem}
\label{th:jet-Nash-transform}
Let $X$ be a variety over a field $k$, and let $\m \colon N(\cF) \to X$ be the
Nash transformation of a coherent sheaf $\cF$ on $X$. For every $n$, let 
\[
\xymatrix{
J_n'(X) \times \Delta_n \ar[d]_{\r_n'} \ar[r]^(.65){\g_n'} & X \\
J_n'(X)
&
}
\]
be the diagram induced by restriction from the universal $n$-jet of $X$; here,
we denote $\D_n = \Spec k[t]/(t^{n+1})$. Define 
\[
\cF_n' := (\r_n')_*(\g_n')^*\cF.
\]
Then the induced map $\m_n' \colon J_n'(N(\cF)) \to J_n'(X)$ factors as
\[
\xymatrix@C=20pt{
J_n'(N(\cF)) \ar@{^(->}[r]^(.55){\iota_n}
& N(\cF_n') \ar[r]^{\n_n} 
& J_n'(X)
}
\]
where $\iota_n$ is an open immersion and $\n_n$ is the Nash transformation of
$\cF_n'$. 
\end{theorem}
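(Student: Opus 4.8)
The strategy is to reduce everything to a universal property. The key observation is that the Nash transformation $N(\cF_n') \to J_n'(X)$ is characterized among projective birational morphisms $Z \to J_n'(X)$ by the property that the pull-back of $\cF_n'$ admits a locally free quotient of rank $r = \operatorname{rk}\cF_n'$. So I would proceed in three movements: first identify $\operatorname{rk}\cF_n'$ and understand the sheaf $\cF_n'$ well enough to make the universal property usable; second show that $J_n'(N(\cF))$ does carry such a quotient, giving the factorization $\iota_n$; third show $\iota_n$ is an open immersion.

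\textbf{Step 1: the sheaf $\cF_n'$ and its rank.} I would first work generically: over the smooth locus of $X$ (or rather over the open dense locus of $J_n'(X)$ lying over it), $\cF$ is locally free of rank $r$, the map $\g_n'$ factors through that locus, and $(\r_n')_*(\g_n')^*\cF$ is locally free of rank $r(n+1)$ — because pulling back along $\g_n'$ and pushing forward along the finite flat map $\r_n'$ (whose fibers are $\Delta_n$, of length $n+1$) multiplies ranks by $n+1$. Since $J_n'(X)$ is irreducible with this as its generic behavior, $\operatorname{rk}\cF_n' = r(n+1)$. (One should note $\cF_n'$ need not be exactly $\Om$-like away from this locus; that is fine, only the rank and the universal property matter.)

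\textbf{Step 2: producing the quotient on $J_n'(N(\cF))$.} On $N(\cF)$ there is by definition a universal locally free quotient $q\colon \m^*\cF \surj \cQ$ of rank $r$. The plan is to pull this back along the $n$-jet construction. Concretely, apply the functor $(\r_n')_*(\g_n')^*$ — but now to the jet scheme $J_n'(N(\cF))$, using its own universal jet diagram, which maps compatibly via $\m_n'$ to that of $X$. Base change gives a canonical map $(\m_n')^*\cF_n' \to (\r_n^{N})_*(\g_n^{N})^*\m^*\cF$, and on the right the quotient $q$ induces a surjection onto $(\r_n^{N})_*(\g_n^{N})^*\cQ$, which is locally free of rank $r(n+1)$ because $\cQ$ is locally free of rank $r$ and $\r_n^{N}$ is finite flat of degree $n+1$. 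One must check the composite $(\m_n')^*\cF_n' \to (\r_n^{N})_*(\g_n^{N})^*\cQ$ is surjective: this is the point where I would use that over the generic point the map $(\m_n')^*\cF_n' \to (\r_n^{N})_*(\g_n^{N})^*\m^*\cF$ is an isomorphism (both sides locally free of rank $r(n+1)$, and $\m$ is birational so $\cQ$ agrees with $\cF$ generically), hence surjectivity holds on a dense open, hence — since the target is locally free and we are on a variety — a Nakayama/support argument upgrades it everywhere, OR one argues directly that the natural map is surjective because push-forward along a finite flat map is exact and preserves surjections of the pulled-back sheaves. The second route is cleaner: $(\g_n^{N})^*$ preserves the surjection $\m^*\cF \surj \cQ$, and $(\r_n^{N})_*$ is exact here, so $(\r_n^{N})_*(\g_n^{N})^*\m^*\cF \surj (\r_n^{N})_*(\g_n^{N})^*\cQ$; one then only needs the base-change map $(\m_n')^*\cF_n' \to (\r_n^{N})_*(\g_n^{N})^*\m^*\cF$ to be surjective, which again is generically an isomorphism and can be checked. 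Granting this, the universal property of $N(\cF_n')$ yields the factorization $\m_n' = \n_n \circ \iota_n$ with $\iota_n$ a $J_n'(X)$-morphism.

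\textbf{Step 3: $\iota_n$ is an open immersion.} Here I would combine two facts. First, $\iota_n$ is birational: over the dense open of $J_n'(X)$ where everything is locally free, both $J_n'(N(\cF))$ and $N(\cF_n')$ map isomorphically (the map $\m$ is an isomorphism over the smooth locus of $X$, and the jet schemes are taken of main components, so $\m_n'$ is birational and $\n_n$ is birational). Second, $\iota_n$ is a monomorphism that is moreover étale/unramified — or more directly, I would show it is an \emph{immersion} by exhibiting a left inverse locally: the universal quotient on $N(\cF_n')$ pulls back to the universal quotient on $J_n'(N(\cF))$, and conversely the universal property of $N(\cF)$ applied to the structure present on $J_n'(N(\cF))$... actually the cleanest argument: $J_n'(N(\cF))$ is a locally closed subscheme of $J_n(N(\cF))$, which sits inside $J_n(\operatorname{Gr})$ for the relevant Grassmann bundle $\operatorname{Gr}$ over $X$; one shows the image of $\iota_n$ is exactly the locus in $N(\cF_n')$ where a certain naturally-defined jet-compatibility (the quotient being "induced by a single rank-$r$ quotient of $\cF$ along the jet") holds, and that this locus is open because it is the non-vanishing locus of the section measuring the failure of that compatibility.

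\textbf{Main obstacle.} The hard part is Step 3, specifically proving openness of the image of $\iota_n$ rather than merely that $\iota_n$ is a birational monomorphism (which by itself does not force an open immersion). I expect this requires either a functor-of-points description of $N(\cF_n')$ — identifying its $T$-points with rank-$r(n+1)$ locally free quotients of $\cF_n'$, then carving out which of these come from the jet construction applied to a rank-$r$ quotient of $\cF$ — or a concrete local computation over a presentation $\cO_X^{\oplus m} \to \cO_X^{\oplus \ell} \to \cF \to 0$, tracking how the Grassmann bundle and its jet scheme embed into the Grassmann bundle of $\cF_n'$. The description of $\Om_{J_n'(X)}$ from \cite{dFD} is presumably what makes the bookkeeping in this local computation tractable and is the reason that reference is invoked. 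The rank computation (Step 1) and the construction of the quotient (Step 2) I expect to be essentially formal once the exactness of push-forward along the finite flat jet-evaluation map and the behavior of base change are set up carefully.
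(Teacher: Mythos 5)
Your Steps 1 and 2 match the paper's overall strategy --- work with the functor of points of $\Gr(\cF_n,(n+1)r)$, identify the rank as $(n+1)r$ from the generic locus, and use affine base change for $\r_n$ to produce the natural comparison map from $J_n(\Gr(\cF,r))$. (One simplification you could have used: the base-change map you worry about in Step 2 is actually an \emph{isomorphism}, not merely a surjection to be checked --- push-forward along the affine map $\r_n'$ commutes with arbitrary base change, and $\m\circ\g_n^{N'} = \g_n'\circ(\m_n'\times\id_{\D_n})$ by functoriality of jets. So the universal rank-$r$ quotient on $N(\cF)$ directly pushes forward to a rank-$(n+1)r$ locally free quotient of $(\m_n')^*\cF_n'$ and the factorization $\iota_n$ is immediate.)

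The real issue is Step 3, which you yourself flag as the main obstacle, and here the proposal has a genuine gap. Knowing that $\iota_n$ is a birational monomorphism does not give an open immersion, and your suggested route --- identifying the image as a non-vanishing locus of some section measuring a ``jet-compatibility'' --- does not describe an actual argument. The condition that a rank-$(n+1)r$ quotient $\cR$ of $\p_*\a^*\cF$ descend from a rank-$r$ quotient of $\a^*\cF$ over $k[t]/(t^{n+1})$ is the condition that the kernel $\cK$ be a $k[t]/(t^{n+1})$-submodule (i.e.\ $t\cK \subset \cK$); there is no evident reason why this cuts out an open set in $\Gr(\cF_n,(n+1)r)$. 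The paper's key observation is that you must restrict attention to $N(\cF_n')$, and there the condition becomes automatic: density of the image forces $\a$ to be dominant, so $\p_*\a^*\cF$ has rank exactly $(n+1)r$ and the kernel of any surjection onto a locally free module of that rank is the full torsion submodule; torsion is stable under multiplication by $t$, so $\cK$ is automatically a $k[t]/(t^{n+1})$-submodule. This is packaged as a lifting lemma (\cref{th:lifting} in the paper) for maps from local schemes, which together with a formal-monomorphism argument gives that $\iota_n$ is an isomorphism on local rings; combined with universal injectivity this yields an open immersion. That torsion argument is the missing idea in your proposal --- without it, openness of the image is not established. (Also, the citation \cite{dFD} you invoke enters only in the deduction of \cref{th:jet-Nash-blowup} from \cref{th:jet-Nash-transform}, to identify $\Om_{J_n'(X)}$ with $(\r_n')_*(\g_n')^*\Om_X$; it is not used in the proof of \cref{th:jet-Nash-transform} itself.)
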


If in this theorem we take $\cF = \I \subset \O_X$, an ideal sheaf on $X$, then
$\cF_n'$ is not an ideal sheaf. However, the sheaf $\wedge^{(n+1)}\cF_n'$,
modulo torsion, is isomorphic to an ideal sheaf $\fa_n$, and $N(\cF_n') =
N(\fa_n)$. Our approach enables us to make explicit computations and hence to
provide a formula for the generators of $\fa_n$. 

A motivation for \cref{th:jet-Nash-transform} comes from the Nash problem on
families of arcs through the singularities of a variety \cite{Nas95} and, more
specifically, from the problem of lifting wedges \cite{LJ80,Reg06}. In
dimension two, the Nash problem has been settled in characteristic zero in
\cite{FdBPP12} but it remains open in positive characteristics. The algebraic
proof given in \cite{dFD16} may be adaptable to positive characteristics,
provided one can avoid certain wild ramifications that could occur in the
proof. A possible approach is to look for suitable
deformations of wedges, and this requires working with relative
compactifications of the maps $J_n(Y) \to J_n(X)$ where $Y \to X$ is the
minimal resolution of the surface. \cref{th:jet-Nash-transform} provides a
first step in this direction. 

\subsection{Acknowledgments} 

We thank Mircea Musta\c t\u a for pointing out an error in a previous version of the paper
and the referee for a careful reading of the paper and valuable comments and corrections.

\section{Proofs}

We work over an arbitrary field $k$. For every integer $n \ge 0$, the
\emph{$n$-th jet scheme} $J_n(X)$ of a scheme $X$ is the scheme representing
the functor of points defined by
\[
J_n(X)(Z) = X(Z \times \D_n)
\]
for any scheme $Z$, where $\D_n = \Spec k[t]/(t^{n+1})$. We denote by 
\[
\xymatrix{
J_n(X) \times \Delta_n \ar[d]_{\r_n} \ar[r]^(.65){\g_n} & X \\
J_n(X)
&
}
\]
the universal $n$-jet of $X$. For generalities about jet schemes, we refer
to \cite{Voj07,EM09}. If $X$ is a variety, then there exists a unique
irreducible component of $J_n(X)$ dominating $X$, and this component has
dimension $(n+1)\dim X$. We shall denote it by $J_n'(X)$ and call it the
\emph{main component} of $J_n(X)$.

Given a coherent sheaf $\cF$ on a scheme $X$, and a positive integer $r$, we
denote by $\Gr(\cF,r)$ the \emph{Grassmann bundle} over $X$ parameterizing
locally free quotients of $\cF$ of rank~$r$, where by the term \emph{quotient}
we mean an equivalence class of surjective maps from the same source where two surjections
are identified whenever they have the same kernel. 
This scheme represents the functor of points given by 
\begin{align*}
\Gr(\cF,r)(Z) &= 
\big\{ \big(Z \xrightarrow{p} X,\, p^*\cF \twoheadrightarrow {\mathcal Q}\big) 
\mid \\
& \hskip1cm \text{${\mathcal Q}$ locally free sheaf on $Z$ of rank $r$}\big\}
\end{align*}
for any scheme $Z$.

Suppose now that $X$ is a variety, and let $\cF$ be a coherent sheaf on $X$ of
rank $r$. The \emph{Nash transformation} of $\cF$ is defined to be the
irreducible component of $\Gr(\cF,r)$ dominating $X$, and is denoted by
$N(\cF)$. The natural projection $\Gr(\cF,r) \to X$ induces the blow-up map
$N(\cF) \to X$. The \emph{Nash blow-up} $N(X) \to X$ is, by definition, the
Nash transformation of the sheaf of K\"ahler differentials $\Om_X$.

\begin{proof}[Proof of \cref{th:jet-Nash-transform}]
The sheaf $\cF_n'$ is the restriction, under the inclusion $J_n'(X) \subset J_n(X)$, of
the sheaf
\[
\cF_n := (\r_n)_*\g_n^*\cF.
\]
By construction, $J_n'(N(\cF))$ is an irreducible component of the jet scheme
$J_n(\Gr(\cF,r))$. Similarly, observing that $\cF_n'$ is a sheaf of rank
$(n+1)r$ and keeping in mind that $J_n'(X)$ is an irreducible component of
$J_n(X)$, we see that $N(\cF_n')$ is an irreducible component of
$\Gr(\cF_n,(n+1)r)$. We claim that there is a universally injective map
\[
i \colon J_n(\Gr(\cF,r)) \hookrightarrow \Gr(\cF_n,(n+1)r),
\]
defined over $X$, which agrees with the natural identification of these schemes
over the open set where $X$ is smooth and $\cF$ is locally free, 
and restricts to an open immersion from $J_n'(N(X))$ to $N(J_n'(X))$. 
Note that the existence of such a map implies the statement of the theorem. 

In order to prove this claim, we compare the functors of points of the schemes
$J_n(\Gr(\cF,r))$ and $\Gr(\cF_n,(n+1)r)$. For every scheme $Z$, we have
\begin{align*}
J_n(\Gr(\cF,r))(Z) &= \Gr(\cF,r)(Z \times \D_n) \\
&= \big\{ \big(Z \times \Delta_n \xrightarrow{\alpha} X,\, 
\alpha^*\cF \twoheadrightarrow {\mathcal Q}\big) 
\mid \\
& \hskip1cm \text{${\mathcal Q}$ locally free sheaf on $Z \times \Delta_n$ of rank $r$}\big\}
\end{align*}
and
\begin{align*}
\Gr(\cF_n,(n+1)r)(Z) 
&= \big\{ \big(Z \xrightarrow{\beta} J_n(X),\, 
\beta^*\cF_n \twoheadrightarrow {\mathcal R}\big) 
\mid \\
&\hskip1cm \text{${\mathcal R}$ locally free sheaf on $Z$ of rank $(n+1)r$}\big\}.
\end{align*}

By the description of $J_n(X)$ via the functor of points, every $\beta \colon Z
\to J_n(X)$ corresponds to a unique $\alpha \colon Z \times \Delta_n \to X$,
and for any such pair of maps there is a commutative diagram
\[
\xymatrix@C=40pt{
Z \times \Delta_n \ar[d]_{\p} \ar[r]_(.45){\b\times\id_{\D_n}} \ar@/^20pt/[rr]^\alpha
& J_n(X) \times \Delta_n \ar[d]^{\r_n} \ar[r]_(.65){\g_n}
& X \\
Z \ar[r]_(.45)\beta 
& J_n(X)
&
}
\]
where $\p$ is the projection onto the first factor. Note that taking
push-forward along $\p$ of a sheaf on $Z \times \D_n$ simply means that we are
restricting scalars to $\O_Z$ and forgetting the given $\O_{Z \times
\D_n}$-module structure of the sheaf. 

By the definition of $\cF_n$ and base-change, which holds in this setting 
because $\r_n$ and $\p$ are affine, we have
\[
\b^*\cF_n = \p_*\a^*\cF.
\]
Using the identification $J_n(X)(Z) = X(Z \times \D_n)$, the above formula
yields the following alternative description of the functor of points of
$\Gr(\cF_n,(n+1)r)$:
\begin{align*}
\Gr(\cF_n,(n+1)r)(Z) 
&= \big\{ 
\big(
Z\times\D_n \xrightarrow{\a} X,\,
\p_*\a^*\cF \twoheadrightarrow {\mathcal R}
\big) 
\mid \\
& \hskip1cm \text{${\mathcal R}$ locally free sheaf on $Z$ of rank $(n+1)r$}\big\}.
\end{align*}

For every locally free sheaf ${\mathcal Q}$ on $Z \times \Delta_n$ of
rank $r$, the push-forward $\p_*{\mathcal Q}$ is a locally free sheaf on $Z$ of
rank $(n+1)r$. Taking push-forwards via $\p$ is exact,
and any two quotients of $\a^*\cF$ are identified (i.e., they define the same kernel in $\a^*\cF$)
if and only if their push-forwards are identified as quotients of $\p_*\a^*\cF$
(i.e., they define the same kernel in $\p_*\a^*\cF$).
This means that taking push-forwards via $\p$ defines a natural injection 
\[
J_n(\Gr(\cF,r))(Z) \hookrightarrow \Gr(\cF_n,(n+1)r)(Z).
\]
As this holds for every scheme $Z$, we deduce that there is a
naturally defined universally injective morphism
\[
i \colon J_n(\Gr(\cF,r)) \hookrightarrow \Gr(\cF_n,(n+1)r).
\]
It is immediate to see that $i$ is defined over $X$ and therefore it agrees with the natural
identification of these schemes over the open set where $X$ is smooth and 
$\cF$ is locally free. Furthermore, 
the restriction of $i$ to $J_n'(N(\cF))$ gives a universally injective map
$\iota_n \colon J_n'(N(\cF)) \to N(\cF_n')$. To finish the proof, we need to show that $\iota_n$ 
is a local isomorphism, that is, it induces isomorphisms on all local rings. To
this end, we prove the following property. 

\begin{lemma}
\label{th:lifting}
Let $(A,\fm)$ be a Noetherian local domain over $k$, and set $U = \Spec A$ and $P =
\Spec A/\fm$. Assume that $f_0$ and $g$ are morphisms as in the diagram
\[
\xymatrix{
P \ar[r]^(.33){f_0} \ar@{^(->}[d] & J_n(\Gr(\cF,r)) \ar@{^(->}[d]^i \\
U \ar[r]_(.25){g} \ar@{..>}[ru]^(.38){f} & \Gr(\cF_n,(n+1)r)
}
\]
such that the square sub-diagram commutes and the image of $g$ is a dense subset of $N(\cF'_n)$.
Then there exists a unique morphism $f$ (marked by the dotted arrow in
the diagram) making the whole diagram commute.
\end{lemma}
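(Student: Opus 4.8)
The plan is to reduce the whole diagram to a statement in commutative algebra over the local ring $A_n := A[t]/(t^{n+1})$, and then to construct the lift $f$ by hand. Using the functor-of-points descriptions established above, the morphism $g$ corresponds to a jet $\a\colon U\times\D_n\to X$ (equivalently $\b\colon U\to J_n(X)$) together with a surjection $\f\colon\p_*\a^*\cF\twoheadrightarrow R$ onto a locally free sheaf $R$ of rank $(n+1)r$ on $U=\Spec A$. Writing $M:=\a^*\cF$, a finitely generated $A_n$-module, we have $\p_*\a^*\cF=M$ with its underlying $A$-module structure (call it $M_A$); put $K:=\ker\f\subseteq M_A$, and recall $R$ is free over $A$. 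Likewise $f_0$ corresponds to a jet (which must coincide with $\a|_{P\times\D_n}$ once the square commutes) together with a surjection $M/\fm M\twoheadrightarrow Q_0$ of $(A/\fm)[t]/(t^{n+1})$-modules onto a free module $Q_0$ of rank $r$; since $R$ is $A$-flat, commutativity of the square is equivalent to the single identity $\ker(M/\fm M\to Q_0)=(K+\fm M)/\fm M$. In this language a lift $f$ is precisely a surjection $M\twoheadrightarrow Q$ of $A_n$-modules onto a locally free module $Q$ of rank $r$ such that $\ker(M_A\to Q)=K$ (this forces $i\circ f=g$) and $\ker(M/\fm M\to Q/\fm Q)=\ker(M/\fm M\to Q_0)$ (this forces $f|_P=f_0$). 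Uniqueness is then immediate, for the first condition identifies $\ker(M\to Q)$ with the subset $K$ of $M$ and hence determines $Q$; this is just the injectivity of $i$ proved above.

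For existence, the only possible choice is $Q:=M/K$, and the crux is to show that $K$ is actually an $A_n$-submodule of $M$ and that $M/K$ is locally free of rank $r$ over $A_n$. This is where the density hypothesis enters. Since the image of $g$ is dense in the irreducible scheme $N(\cF'_n)$, the map $g$ sends the generic point of $U$ to the generic point of $N(\cF'_n)$, which lies over the generic point of $J_n'(X)$; as the constant-term morphism $J_n'(X)\to X$ is dominant, the latter lies over the generic point of $X$, near which $\cF$ is locally free of rank $r$. A short computation with the universal jet shows that $\cF_n$ is then locally free of rank $(n+1)r$ near the generic point of $J_n'(X)$, so over that neighborhood $\Gr(\cF_n,(n+1)r)\to J_n(X)$ is an isomorphism and the universal quotient is an isomorphism; pulling back along $g$, and using the base-change identity $g^*\cF_n=\p_*\a^*\cF$ from the main proof, we get that $\f$ becomes an isomorphism after tensoring with the fraction field of $A$. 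Hence $K$ is a torsion $A$-module; since moreover $M_A/K\cong R$ is torsion-free, $K$ equals the torsion submodule $T(M_A)$, which is stable under multiplication by $t$ and is therefore an $A_n$-submodule of $M$. So $Q:=M/K$ is an honest $A_n$-module, with $Q_A\cong R$ free of rank $(n+1)r$ over $A$ (in particular $A$-flat).

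It remains to promote this to $A_n$-freeness, and here the point $f_0$ is used. By the identity recorded in the first paragraph, $Q\otimes_A(A/\fm)=M/(K+\fm M)\cong Q_0$ is free of rank $r$ over $(A/\fm)[t]/(t^{n+1})$, so $Q\otimes_{A_n}(A/\fm)$ is $r$-dimensional over the residue field $A/\fm$ of the Noetherian local ring $A_n$. Nakayama then gives a surjection $A_n^{\,r}\twoheadrightarrow Q$; restricting scalars to $A$, its kernel $N$ sits in an exact sequence $0\to N\to A^{(n+1)r}\to Q_A\to 0$ with $Q_A$ free of rank $(n+1)r$, and a surjection of free $A$-modules of the same finite rank is an isomorphism, so $N=0$ and $Q\cong A_n^{\,r}$. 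Thus $Q=M/K$ with the projection $M\twoheadrightarrow Q$ provides the desired $f$: the two compatibility identities hold by construction (the second is exactly the square-commutativity identity, since $\ker(M/\fm M\to Q/\fm Q)=(K+\fm M)/\fm M$), and uniqueness was noted above. The main obstacle is the middle step: seeing that the density hypothesis forces $K$ to be the torsion submodule of $M_A$, which is automatically $t$-stable and therefore makes $M/K$ a module over $A_n$ in the first place; once that is in hand, the remaining $A_n$-freeness is a rank count, with $f_0$ entering only to control the reduction modulo $\fm$.
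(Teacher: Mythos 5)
Your proof follows the same strategy as the paper's: translate to commutative algebra over $A_n = A[t]/(t^{n+1})$, use the density hypothesis to show that the kernel $K$ of $\pi_*\alpha^*\cF \twoheadrightarrow \cR$ is exactly the $A$-torsion submodule, observe that the torsion submodule is automatically $t$-stable, and conclude that $\cR$ inherits an $A_n$-module structure so that the surjection descends to the level of $\Gr(\cF,r)$. The one place where you go further than the paper is the last step: the paper asserts ``This gives the lift $f$\ldots, which is clearly unique and makes the diagram commute,'' without explicitly verifying that $\cR$, once endowed with its $A_n$-structure, is free of rank $r$ over $A_n$. You supply this verification via the commutativity identity (which pins down $\cR\otimes_A A/\fm$ as the given rank-$r$ free $(A/\fm)[t]/(t^{n+1})$-module $\cQ_0$), Nakayama over the local ring $A_n$, and the rank count showing a surjection $A_n^r \twoheadrightarrow \cR$ of $A$-free modules of the same $A$-rank is an isomorphism. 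This is a genuine (if small) gap in the paper's writeup, and your argument closes it cleanly; otherwise the two proofs coincide.
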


\begin{proof}
Suppose $f_0$ and $g$ are given. Let $\p_0 \colon P \times\D_n \to P$ and $\p
\colon U \times \D_n \to U$ denote the respective projections to the first
components. By the descriptions of the functors of points, we can write 
\[
f_0 = \big(P \times \D_n \xrightarrow{\a_0} X,\, \a_0^*\cF \surj \cQ\big),
\]
where $\cQ$ is a locally free $A[t]/(t^{n+1})$-module of rank $r$, and
\[
g = \big(U\times\D_n \xrightarrow{\a} X,\, \p_*\a^*\cF \surj \cR\big)
\] 
where $\cR$ is a locally free $A$-module of rank $(n+1)r$. The commutativity of
the square sub-diagram in the statement means that $\a_0$ is the restriction of
$\a$ and $\cR \otimes_A A/\fm = (\p_0)_*\cQ$. The fact that the image of $g$ 
is dense in $N(\cF'_n)$ implies that $\a$ is dominant, and hence 
$\p_*\a^*\cF$ is a sheaf of rank $(n+1)r$. Since 
$\cR$ is a locally free quotient of the same rank of $\p_*\a^*\cF$, the kernel $\cK$
of $\p_*\a^*\cF \to \cR$ is the torsion $A$-submodule of $\p_*\a^*\cF$.
Every element of $\sum_{i \ge 0} t^i\cK$, viewed as an $A$-submodule of $\p_*\a^*\cF$,
is torsion, and therefore we have $\sum_{i \ge 0} t^i\cK = \cK$. This shows that $\cK$ is 
an $A[t]/(t^{n+1})$-submodule of $\p_*\a^*\cF$ and hence $\cR$ is an 
$A[t]/(t^{n+1})$-module quotient of $\p_*\a^*\cF$.
This gives the lift $f$ of $g$ as in the diagram, which is clearly unique
and makes the diagram commute.
\end{proof}

We apply \cref{th:lifting} to the local rings of $N(\cF_n')$ at the
points in the image of $\iota_n$. Using the fact that $i$ is injective on the
functors of points, we deduce that $i$ induces isomorphisms on the local rings.

To see this last implication, let $\O_q$ denote the local ring of
$J_n'(N(\cF))$ at a point $q$, and let $\O_p$ denote the local ring of
$N(\cF_n')$ at $p = \iota_n(q)$.  Let $g \colon \Spec \O_p \to
\Gr(\cF_n,(n+1)r)$ and $h \colon \Spec \O_q \to J_n(\Gr(\cF,r))$ be the natural
maps, and let $j \colon \Spec \O_q \to \Spec \O_p$ be the map induced by $\iota_n$. 

We have the following diagram:
\[
\xymatrix{
\Spec\O_q \ar@{^(->}[r]^(.45)h \ar[d]^{j} & J_n(\Gr(\cF,r)) \ar[d]^i \\ 
\Spec\O_p \ar@{^(->}[r]_(.37)g \ar@{..>}@/^15pt/[u]^s \ar@{..>}[ur]^(.45)f 
& \Gr(\cF_n,(n+1)r).
}
\]
Here, the square sub-diagram is commutative, $f$ exists by \cref{th:lifting}
and hence satisfies 
\begin{equation}
\label{eq:if=g}
i \o f = g,
\end{equation} 
and the universal property of local rings implies that $f$ factors through $h$,
so that we have a morphism $s$, as in the diagram, satisfying
\begin{equation}
\label{eq:hs=f}
h \o s = f.
\end{equation} 
Using the commutativity of the square sub-diagram and \cref{eq:if=g}, we get
\[
i\o h = g \o j = i \o f \o j.
\]
Then, using the fact that $i$ is injective at the level of functors of points
and hence is a monomorphism, we deduce that
\begin{equation}
\label{eq:h=fi_q}
h = f \o j.
\end{equation}

Now, using \cref{eq:hs=f,eq:h=fi_q}, we get
\[
h = f \o j = h \o s \o j,
\]
and since $h$ is a monomorphism, this implies that $s \o j$ is the identity
of $\Spec \O_q$. Using \cref{eq:hs=f,eq:h=fi_q} in a different order, we get
\[
f = h\o s = f \o j \o s.
\]
Since $g$ is a monomorphism, it follows by \cref{eq:if=g} that $f$ is a
monomorphism, and this implies that $j \o s$ is the identity of $\Spec \O_p$.
This proves that $j$ is an isomorphism, which completes the proof of the
theorem.
\end{proof}

\begin{proof}[Proof of \cref{th:jet-Nash-blowup}]
By \cite[Theorem~B]{dFD}, there is an isomorphism
\[
\Om_{J_n(X)} \cong (\r_n)_*\g_n^*\Om_X,
\]
and this implies that
\[
N(J_n'(X)) = N\big((\r_n)_*\g_n^*\Om_X \otimes_{\O_{J_n(X)}}\O_{J_n'(X)}\big)
= N\big((\r_n')_*(\g_n')^*\Om_X\big),
\]
where $\r_n'$ and $\g_n'$ are the restrictions of $\r_n$ and $\g_n$ to $J_n'(X)
\times \D$. Therefore \cref{th:jet-Nash-blowup} reduces to
\cref{th:jet-Nash-transform} with $\cF = \Om_X$. 
\end{proof}

\begin{corollary}
\label{th:cor-jet-Nash-blowup}
For any variety $X$, the following properties are equivalent:
\begin{enumerate}
\item
the Nash blow-up $N(J_n'(X)) \to J_n'(X)$ is an isomorphism for some $n\ge 0$;
\item
the Nash blow-up $N(J_n'(X)) \to J_n'(X)$ is an isomorphism for every $n\ge 0$.
\end{enumerate}
\end{corollary}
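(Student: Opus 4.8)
The plan is to reduce \cref{th:cor-jet-Nash-blowup} to the assertion that, \emph{for every $n \ge 0$, the Nash blow-up $\n_n \colon N(J_n'(X)) \to J_n'(X)$ is an isomorphism if and only if the Nash blow-up $\m \colon N(X) \to X$ is an isomorphism}. Granting this, condition (1) says that $\n_n$ is an isomorphism for some $n$, which is equivalent to $\m$ being an isomorphism, which is in turn equivalent to $\n_n$ being an isomorphism for every $n$, i.e.\ to condition (2); and (2) trivially implies (1). To prove the equivalence I would use \cref{th:jet-Nash-blowup}. Write $\m_n' \colon J_n'(N(X)) \to J_n'(X)$ for the morphism induced by $\m$ on main components. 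By \cref{th:jet-Nash-blowup} there is an open immersion $\iota_n \colon J_n'(N(X)) \hookrightarrow N(J_n'(X))$ compatible with the natural maps to $J_n(X)$; since $J_n'(X) \hookrightarrow J_n(X)$ is a monomorphism, this compatibility says precisely that $\n_n \o \iota_n = \m_n'$.

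For the ``if'' direction, if $\m$ is an isomorphism then so is $\m_n'$, hence $\iota_n$ is a section of the morphism $(\m_n')^{-1} \o \n_n \colon N(J_n'(X)) \to J_n'(N(X))$ and is therefore a closed immersion; being at the same time an open immersion with $N(J_n'(X))$ irreducible, $\iota_n$ is an isomorphism, and then so is $\n_n = \m_n' \o \iota_n^{-1}$.

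For the ``only if'' direction, if $\n_n$ is an isomorphism then $\m_n' = \n_n \o \iota_n$ is an open immersion, and the task is to propagate this down to $\m$. For this I would use the zero sections $\s_X \colon X \to J_n(X)$ and $\s_{N(X)} \colon N(X) \to J_n(N(X))$: these are closed immersions (sections of affine morphisms), and their images, being irreducible closed subschemes that dominate the respective bases, lie in the unique components of the jet schemes dominating those bases, so $\s_X$ and $\s_{N(X)}$ factor through $J_n'(X)$ and $J_n'(N(X))$ as closed immersions. Functoriality of jet schemes gives a commutative square
\[
\xymatrix{
N(X) \ar[r]^{\m} \ar@{^(->}[d]_{\s_{N(X)}} & X \ar@{^(->}[d]^{\s_X} \\
J_n'(N(X)) \ar@{^(->}[r]^{\m_n'} & J_n'(X).
}
\]
Traversing it through the lower-left corner, $\m_n' \o \s_{N(X)}$ is a closed immersion followed by an open immersion, hence an immersion; therefore $\s_X \o \m$ is an immersion, and since $\s_X$ is a closed immersion this forces $\m$ to be an immersion (factor $\s_X \o \m$ as an isomorphism onto a locally closed subscheme $S \subseteq J_n'(X)$; then $S$ is reduced and supported in the reduced closed subscheme $\s_X(X) \cong X$, so $S$ is a locally closed subscheme of $\s_X(X)$, which realizes $\m$ as an immersion into $X$). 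Finally $\m$ is projective, so the immersion $\m$ is proper, hence a closed immersion; and $\m$ is birational while $X$ is reduced and irreducible, so a birational closed immersion $N(X) \hookrightarrow X$ is an isomorphism.

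The step I expect to require the most care is the ``only if'' direction: not the idea (restrict the zero sections and chase the square), but checking that the zero sections genuinely factor through the main components — so that $\m_n'$, rather than merely $\m_n = J_n(\m)$, fits into the commutative square — and keeping careful track of the (reduced) scheme structures in the deduction that $\m$ is an immersion. The ``if'' direction and the identity $\n_n \o \iota_n = \m_n'$ extracted from \cref{th:jet-Nash-blowup} are routine.
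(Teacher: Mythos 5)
Your proof is correct and follows the same route as the paper, which disposes of the corollary in one line by observing that, via \cref{th:jet-Nash-blowup}, both (1) and (2) are equivalent to the Nash blow-up $N(X)\to X$ being an isomorphism. Your two implications — and in particular the zero-section argument establishing that $\n_n$ an isomorphism forces $\m$ an isomorphism — supply exactly the details the paper leaves implicit.
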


\begin{proof}
By \cref{th:jet-Nash-blowup}, both properties are equivalent to the fact that
the Nash blow-up $N(X) \to X$ is an isomorphism.
\end{proof}

In positive characteristics, there are examples of singular varieties whose
Nash blow-up is an isomorphism (see \cite[Example~1]{Nob75}), and
\cref{th:cor-jet-Nash-blowup} implies that this property, whenever it holds,
propagates through all the jet schemes, and conversely.  

By contrast, when the ground field is algebraically closed of characteristic
zero the Nash blow-up is an isomorphism if and only if the variety is smooth
(see \cite[Theorem~2]{Nob75}). It is elementary to show that the jet schemes of
a smooth variety are smooth, and conversely it was proved in \cite{Ish09} that
if $X$ is a singular variety then all its jet schemes $J_n(X)$ are singular.
With the above assumptions on the ground field, we deduce the following
stronger statement from \cref{th:cor-jet-Nash-blowup}.

\begin{corollary}
If $X$ is a singular variety defined over an algebraically closed field of
characteristic zero, then the main component of $J_n'(X)$ of $J_n(X)$ is
singular for every $n$.
\end{corollary}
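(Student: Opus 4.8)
The plan is to combine Nobile's smoothness criterion with \cref{th:cor-jet-Nash-blowup}. Recall that, over an algebraically closed field of characteristic zero, a variety $Y$ is smooth if and only if its Nash blow-up $N(Y) \to Y$ is an isomorphism \cite[Theorem~2]{Nob75}. Since for each $n$ the main component $J_n'(X)$ is, by its very definition as the component of $J_n(X)$ dominating $X$, a variety in the sense used throughout the paper, this criterion applies verbatim to every $J_n'(X)$, and of course to $X$ itself.

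First I would note that, because $X$ is singular, \cite[Theorem~2]{Nob75} tells us that $N(X) \to X$ is not an isomorphism. Next, \cref{th:cor-jet-Nash-blowup}---more precisely, the statement established in its proof---says that $N(J_n'(X)) \to J_n'(X)$ is an isomorphism for some $n \ge 0$, equivalently for every $n \ge 0$, if and only if $N(X) \to X$ is an isomorphism. Combining these two facts, $N(J_n'(X)) \to J_n'(X)$ fails to be an isomorphism for every $n \ge 0$. Applying \cite[Theorem~2]{Nob75} once more, this time to the variety $J_n'(X)$, we conclude that $J_n'(X)$ is not smooth, i.e.\ it is singular, for every $n$. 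One may also observe that a singular variety necessarily has positive dimension, so each $J_n'(X)$ occurring here is a genuine positive-dimensional variety and no degenerate cases intervene.

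There is no real obstacle here: the argument is a short chain of implications through results already in hand. The two points worth a word of care are (i) confirming that $J_n'(X)$ satisfies the hypotheses of \cite[Theorem~2]{Nob75}---that it is a variety---which is immediate from its construction as the main component of $J_n(X)$; and (ii) explaining why this is stronger than Ishii's theorem \cite{Ish09}, which only yields singularity of the (possibly reducible) scheme $J_n(X)$, whereas here we obtain singularity of its main component as an irreducible variety, a conclusion that does not formally follow since a reducible scheme is singular merely along the intersections of its components.
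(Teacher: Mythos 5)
Your argument is exactly the paper's: apply Nobile's characterization (over an algebraically closed field of characteristic zero, the Nash blow-up is an isomorphism if and only if the variety is smooth) to $X$ to see $N(X)\to X$ is not an isomorphism, invoke \cref{th:cor-jet-Nash-blowup} to transfer this to every $N(J_n'(X))\to J_n'(X)$, and apply Nobile again to each $J_n'(X)$. The proposal is correct and takes essentially the same route, with the added (accurate) remark clarifying why the conclusion genuinely strengthens Ishii's result about $J_n(X)$.
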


\section{Computational aspects}

After viewing a projective birational morphism $\m \colon Y \to X$ as the Nash
transformation of a coherent sheaf $\cF$ on a variety $X$,
\cref{th:jet-Nash-transform} provides a construction of a relative
compactification of the induced map $\m_n' \colon J_n'(Y) \to J_n'(X)$ by
taking the Nash transformation of an explicitly described sheaf $\cF_n'$ on
$J_n'(X)$. Such transformation is a projective birational morphism, and
therefore can also be described as the blow-up of an ideal sheaf $\fa_n$ on
$J_n'(X)$. In this section we explain how to compute such ideal.

For simplicity, we assume that $X = \Spec R$ is affine. The following diagram
provides the algebraic counterpart of the restriction to $J_n'(X)$ of the
universal $n$-jet:
\[
    \xymatrix{
        R_n'[t]/(t^{n+1})
        &
        R \ar[l]_-{(\gamma_n')^\sharp}
        \\
        R_n' \ar[u]_-{(\rho_n')^\sharp}
    }
\]
Here $R_n'$ is a quotient of $R_n$, the algebra of Hasse--Schmidt differentials
of order at most $n$, $(\rho_n')^\sharp$ is the natural inclusion map, and
$(\gamma_n')^\sharp$ is induced by the homomorphism
\[
\g_n^\sharp \colon R \to R_n[t]/(t^{n+1}), \quad
    f \mapsto \sum_{i=0}^n D_i(f)\, t^i,
\]
where $(D_0, D_1, \ldots, D_n)$ is the universal Hasse--Schmidt derivation of
order $n$. With this notation, we have $J_n(X) = \Spec R_n$ and $J_n'(X) =
\Spec R_n'$.

If $\t(\cF)$ denotes the torsion of $\cF$, then the two sheaves $\cF_n'$ and
$(\cF/\t(\cF))_n'$ have the same torsion free quotient. We can therefore assume
without loss of generality that $\cF$ is torsion free. Let $F$ denote the
$R$-module associated to $\cF$. If $r$ is the rank of $F$, we can then realize
$F$ as a submodule of $R^r$. Picking a set of generators for $F$ of cardinality
$s$, we obtain a matrix $M \in {\rm Mat}_{r\times s}(R)$ such that $F = \im M$.
Notice that, to produce an ideal whose blow-up gives $Y \to X$, one can take
the ideal generated by the $r \times r$ minors of $M$.

The relative compactification of $\m_n' \colon J_n'(Y) \to J_n'(X)$ constructed
in \cref{th:jet-Nash-transform} is given by the Nash transformation of the
$R_n'$-module
\[
    F_n' := (\gamma_n')^\sharp(F) \cdot (R_n'[t]/(t^{n+1}))^r,
\]
where the $R_n'$-module structure is defined via $(\rho_n')^\sharp$. A
straightforward computation shows that $F_n = \im M_n$ where $M_n \in {\rm
Mat}_{(n+1)r \times (n+1)s}(R_n')$ is the matrix given in block form by
\[
    M_n =
    \begin{bmatrix}
        D_0(M) & 0      & \cdots & 0      \\
        D_1(M) & D_0(M) & \cdots & 0      \\
        \vdots & \vdots & \ddots & \vdots \\
        D_n(M) & D_{n-1}(M) & \cdots & D_0(M) \\
    \end{bmatrix}.
\]
Here $D_i(M)$ is the matrix obtained from $M$ by applying $D_i$ to each entry.
By construction, we have the following property.

\begin{proposition}
With the above notation, the morphism $N(F_n') \to J_n'(X)$ is the blow-up of
the ideal $\fa_n \subset R_n'$ generated by the $(n+1)r \times (n+1)r$ minors
of $M_n$.
\end{proposition}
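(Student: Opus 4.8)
The plan is to reduce the proposition to two ingredients: the identity $F_n' = \im M_n$ asserted just above, and the general principle---already invoked for $\cF$ and $M$ in the case $n = 0$---that over a Noetherian domain the Nash transformation of a torsion-free module of rank $\rho$ is the blow-up of the ideal generated by the $\rho \times \rho$ minors of any matrix whose image is that module. Granting both, it suffices to apply the second ingredient to the $R_n'$-module $F_n'$: by its very definition $F_n'$ sits inside the free $R_n'$-module $(R_n'[t]/(t^{n+1}))^r \cong R_n'^{(n+1)r}$, hence it is torsion-free; it has rank $(n+1)r$ (the rank of $\cF_n'$ recorded in the proof of \cref{th:jet-Nash-transform}; equivalently, it is generically the full free module); and it equals $\im M_n$ with $M_n$ of size $(n+1)r \times (n+1)s$. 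Hence $N(F_n') \to J_n'(X)$ is the blow-up of $\fa_n = I_{(n+1)r}(M_n)$, as claimed.

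For the identity $F_n' = \im M_n$ I would argue directly from the definitions; this is the one genuinely computational point, though it is routine. Writing $\mathbf{m}_1, \dots, \mathbf{m}_s \in R^r$ for the columns of $M$, the $R_n'[t]/(t^{n+1})$-module $(\gamma_n')^\sharp(F)\cdot (R_n'[t]/(t^{n+1}))^r$ is generated by the vectors $(\gamma_n')^\sharp(\mathbf{m}_j) = \sum_{i=0}^{n} D_i(\mathbf{m}_j)\,t^i$; after restricting scalars along $(\rho_n')^\sharp$, a set of $R_n'$-module generators is obtained by multiplying each of these by $1, t, \dots, t^n$ and reducing modulo $t^{n+1}$, and writing out the coordinates in the natural $R_n'$-basis $\{\,t^\ell e_a : 0 \le \ell \le n,\ 1 \le a \le r\,\}$ of $(R_n'[t]/(t^{n+1}))^r$ produces exactly the block lower-triangular Toeplitz matrix $M_n$. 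The only care required is in the truncation and in keeping the two module structures straight.

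For the general principle, let $A$ be a Noetherian domain and $G = \im(A^t \xrightarrow{N} A^\rho)$ a full-rank submodule of $A^\rho$; every torsion-free $A$-module of rank $\rho$ admits such a presentation. Then $\wedge^\rho N \colon \wedge^\rho A^t \to \wedge^\rho A^\rho = A$ has image the ideal $I_\rho(N)$ of maximal minors and induces an isomorphism $\wedge^\rho G / \t(\wedge^\rho G) \cong I_\rho(N)$. The Plücker map $\Gr(G,\rho) \to \Gr(\wedge^\rho G, 1)$, $\cQ \mapsto \wedge^\rho\cQ$, is an isomorphism over the dense open locus of $\Spec A$ where $G$ is locally free of rank $\rho$, so it identifies the components dominating $\Spec A$; since moreover every invertible quotient of $\wedge^\rho G$ factors through $\wedge^\rho G/\t(\wedge^\rho G)$, we obtain $N(G) \cong N\big(\wedge^\rho G/\t(\wedge^\rho G)\big) \cong N(I_\rho(N))$, the last of which is by definition the blow-up of $I_\rho(N)$ (alternatively, one may simply cite the references on Nash transformations used earlier). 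Applying this with $A = R_n'$---a Noetherian domain, $J_n'(X)$ being the main component of $J_n(X)$ with its integral structure---with $G = F_n'$, $\rho = (n+1)r$, and $N = M_n$ gives the proposition. I do not expect an isolated deep difficulty here: the one thing to watch is that all the hypotheses of the principle genuinely hold in the relative setting---that $R_n'$ is a domain, that $F_n'$ has rank exactly $(n+1)r$, and that $\fa_n$ is independent of the chosen generators of $F$---each of which is either built into the set-up or follows from what has already been established.
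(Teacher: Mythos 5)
Your proof is correct and matches the paper's intent: the paper offers no explicit proof, asserting the proposition ``by construction,'' and you have simply unpacked the two standard ingredients that remark implicitly rests on---that $F_n' = \im M_n$ (which the paper itself states as ``a straightforward computation''), and the general fact that over a Noetherian domain the Nash transformation of a finitely generated torsion-free module of rank $\rho$ presented as the image of a matrix is the blow-up of the ideal of $\rho\times\rho$ minors, the same principle the paper already invokes for $n=0$. Your Pl\"ucker-map argument for that principle and the observation that $F_n'$ sits inside a free $R_n'$-module (so torsion-freeness and rank $(n+1)r$ come for free) are exactly the right way to justify the step.
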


The next example shows the computation of the first few ideals $\fa_n$ in a
simple case.

\begin{example}
\label{example1}
Consider $X = \mathbb A^2 = \Spec k[x,y]$, and let $Y \to X$ be the blow-up of
the maximal ideal $(x,y)$. Taking $F$ to be the maximal ideal, we have 
\begin{align*}
M_0 &= \begin{bmatrix} x & y \end{bmatrix},\\
M_1 &=    \begin{bmatrix}
       x   & y   & 0   & 0 \\
       x_1 & y_1 & x   & y \\
    \end{bmatrix},\\
M_2 &=    \begin{bmatrix}
       x   & y   & 0   & 0   & 0   & 0 \\
       x_1 & y_1 & x   & y   & 0   & 0 \\
       x_2 & y_2 & x_1 & y_1 & x   & y \\
    \end{bmatrix}, \\
M_3 &=    \begin{bmatrix}
       x   & y   & 0   & 0   & 0   & 0   & 0 & 0 \\
       x_1 & y_1 & x   & y   & 0   & 0   & 0 & 0 \\
       x_2 & y_2 & x_1 & y_1 & x   & y   & 0 & 0 \\
       x_3 & y_3 & x_2 & y_2 & x_1 & y_1 & x & y \\
    \end{bmatrix},
\end{align*}
where $x_i = D_i(x)$ and $y_i = D_i(y)$. Letting $\fa_n$ be the ideal generated
by the $(n+1)\times(n+1)$ minors of $M_n$, we have 
\begin{align*}
\fa_0 &= (x,y), 
\\
\fa_1 &= \fa_0^2 + (x y_1-y x_1),
\\
\fa_2 &= \fa_0\fa_1 + (y_2 x^2-y x_2 x-x_1 y_1 x+y x_1^2,\,
        x_2 y^2-x_1 y_1 y-x y_2 y+x y_1^2),
\\
\fa_3 &= \fa_0\fa_2 + \fa_1^2 +
\\   
& \hskip0.5cm
+ (
\begin{aligned}[t]
        & y_3 x^3-y x_3 x^2-x_2 y_1 x^2-x_1 y_2 x^2+2 y x_1 x_2 x+x_1^2 y_1 x-y x_1^3
        ,\\
        & y_1 y_2 x^2-y y_3 x^2-x_1 y_1^2 x+y^2 x_3 x+y^2 x_1 x_2+y x_1^2 y_1
        ,\\
        & x_2 y_1 y^2+x_1 y_2 y^2+x y_3 y^2-x_1 y_1^2 y-2 x y_1 y_2 y+x y_1^3-y^3 x_3
        ,\\
        & y_2^2 x^2
            -y_1 y_3 x^2
            +x_2 y_1^2 x
            +y x_3 y_1 x
            -2 y x_2 y_2 x
            -x_1 y_1 y_2 x \, +\\
         &   \hskip2cm +y x_1 y_3 x
            +y^2 x_2^2
            -y^2 x_1 x_3
            -y x_1 x_2 y_1
            +y x_1^2 y_2
    ).
\end{aligned}
\end{align*}
Notice that while the matrices $M_n$ remain simple and have an easily
recognizable structure, the corresponding ideals $\fa_n$ grow in complexity
quite fast.
\end{example}


\begin{bibdiv}
\begin{biblist}

\bib{dFD16}{article}{
   author={de Fernex, Tommaso},
   author={Docampo, Roi},
   title={Terminal valuations and the Nash problem},
   journal={Invent. Math.},
   volume={203},
   date={2016},
   number={1},
   pages={303--331},
}

\bib{dFD}{article}{
   author={de Fernex, Tommaso},
   author={Docampo, Roi},
   title={Differentials on the arc space},
   note={Preprint, \href%
       {http://arxiv.org/abs/1703.07505}
       {\tt arXiv:1703.07505 [math.AG]}}
}

\bib{EM09}{article}{
   author={Ein, Lawrence},
   author={Musta\c t\u a, Mircea},
   title={Jet schemes and singularities},
   conference={
      title={Algebraic geometry---Seattle 2005. Part 2},
   },
   book={
      series={Proc. Sympos. Pure Math.},
      volume={80},
      publisher={Amer. Math. Soc., Providence, RI},
   },
   date={2009},
   pages={505--546},
}

\bib{FdBPP12}{article}{
   author={Fern{\'a}ndez de Bobadilla, Javier},
   author={Pe Pereira, Mar{\'{\i}}a},
   title={The Nash problem for surfaces},
   journal={Ann. of Math. (2)},
   volume={176},
   date={2012},
   number={3},
   pages={2003--2029},
}

\bib{Hir83}{article}{
   author={Hironaka, Heisuke},
   title={On Nash blowing-up},
   conference={
      title={Arithmetic and geometry, Vol. II},
   },
   book={
      series={Progr. Math.},
      volume={36},
      publisher={Birkh\"auser, Boston, Mass.},
   },
   date={1983},
   pages={103--111},
}

\bib{Ish09}{article}{
   author={Ishii, Shihoko},
   title={Smoothness and jet schemes},
   conference={
      title={Singularities---Niigata--Toyama 2007},
   },
   book={
      series={Adv. Stud. Pure Math.},
      volume={56},
      publisher={Math. Soc. Japan, Tokyo},
   },
   date={2009},
   pages={187--199},
}

\bib{LJ80}{article}{
   author={Lejeune-Jalabert, Monique},
   title={Arcs analytiques et r\'esolution minimale des surfaces quasihomog\`enes},
   language={French},
   conference={
      title={S\'eminaire sur les Singularit\'es des Surfaces},
      address={Palaiseau, France},
      date={1976/1977},
   },
   book={
      series={Lecture Notes in Math.},
      volume={777},
      publisher={Springer},
      place={Berlin},
   },
   date={1980},
   pages={303--332},
}

\bib{Nas95}{article}{
   author={Nash, John F., Jr.},
   title={Arc structure of singularities},
   note={A celebration of John F. Nash, Jr.},
   journal={Duke Math. J.},
   volume={81},
   date={1995},
   number={1},
   pages={31--38 (1996)},
}

\bib{Nob75}{article}{
   author={Nobile, A.},
   title={Some properties of the Nash blowing-up},
   journal={Pacific J. Math.},
   volume={60},
   date={1975},
   number={1},
   pages={297--305},
   issn={0030-8730},
}

\bib{OZ91}{article}{
   author={Oneto, A.},
   author={Zatini, E.},
   title={Remarks on Nash blowing-up},
   note={Commutative algebra and algebraic geometry, II (Italian) (Turin,
   1990)},
   journal={Rend. Sem. Mat. Univ. Politec. Torino},
   volume={49},
   date={1991},
   number={1},
   pages={71--82 (1993)},
   issn={0373-1243},
}

\bib{Reg06}{article}{
   author={Reguera, Ana J.},
   title={A curve selection lemma in spaces of arcs and the image of the
   Nash map},
   journal={Compos. Math.},
   volume={142},
   date={2006},
   number={1},
   pages={119--130},
}

\bib{Sem54}{article}{
   author={Semple, J. G.},
   title={Some investigations in the geometry of curve and surface elements},
   journal={Proc. London Math. Soc. (3)},
   volume={4},
   date={1954},
   pages={24--49},
}

\bib{Spi90}{article}{
   author={Spivakovsky, Mark},
   title={Sandwiched singularities and desingularization of surfaces by
   normalized Nash transformations},
   journal={Ann. of Math. (2)},
   volume={131},
   date={1990},
   number={3},
   pages={411--491},
}

\bib{Voj07}{article}{
   author={Vojta, Paul},
   title={Jets via Hasse-Schmidt derivations},
   conference={
      title={Diophantine geometry},
   },
   book={
      series={CRM Series},
      volume={4},
      publisher={Ed. Norm., Pisa},
   },
   date={2007},
   pages={335--361},
}

\bib{Yas07}{article}{
   author={Yasuda, Takehiko},
   title={Higher Nash blowups},
   journal={Compos. Math.},
   volume={143},
   date={2007},
   number={6},
   pages={1493--1510},
}

\end{biblist}
\end{bibdiv}
\vfill

\end{document}